\documentclass{amsart}

\usepackage{xcolor}
\usepackage{setspace}
\usepackage{enumerate}

\newtheorem{theorem}{Theorem}[section]
\newtheorem{prop}[theorem]{Proposition}

\newtheorem{rem}[theorem]{Remark}

\DeclareMathOperator{\osc}{{\rm osc}}

\numberwithin{equation}{section}
\title[$p$-variation of Riemann's function]{On the $p$-variation of Riemann's ``nondifferentiable'' function}
\author{Martin Lind}
\address{Department of Mathematics and Computer Science, Karlstad University, Universitetsgatan 2, 65188 Karlstad, Sweden}
\email{martin.lind@kau.se}

\subjclass[2020]{26A27, 26A45}

\keywords{Riemann's ``nondifferentiable'' function, $p$-variation}

\begin{document}

\begin{abstract}
    We investigate the variational properties of Riemann's ``nondifferentiable'' function $R$. We show that $R$ has finite $p$-variation for $p>4/3$ and infinite $p$-variation for  $p<4/3$. As an application of our results, we provide a new perspective on an upper bound of the Hausdorff dimension of the image of $R$, due to Eceizabarrena.
\end{abstract}
\maketitle

\section{Introduction}
Riemann proposed the function
\begin{equation}
    \label{riemann}
    R(x)=\sum_{n=1}^\infty\frac{\sin(\pi n^2x)}{n^2}
\end{equation}
as a potential example of an everywhere continuous but nowhere differentiable function. Hardy \cite{Ha1916} proved that $R$ is non-differentiable a.e., while Gerver \cite{Ge70} showed that $R$ is differentiable at certain rationals, thereby disproving Riemann's conjecture that $R$ is nowhere differentiable. The remarkably intricate regularity properties of Riemann's function were uncovered by Jaffard \cite{Ja96}, building on earlier work of Duistermaat \cite{Du91}.

Variational properties of Riemann's function has not been investigated before. In this paper, we shall focus on the \emph{$p$-variation} of $R$ (in the sense of Wiener).

Let $f:\mathbb{R}\rightarrow\mathbb{C}$ be periodic with period $P$. For any $E\subset\mathbb{R}$ denote
\begin{equation}
    \nonumber
    \osc(f;E)=\sup_{x,y\in E}|f(x)-f(y)|.
\end{equation}
For $1\le p<\infty$, the \emph{$p$-variation of $f$} is defined by
\begin{equation}
    \label{p-var}
    \mathcal{V}_p(f;[0,P])=\sup_{\mathcal{I}}\left(\sum_{I\in\mathcal{I}}\osc(f;I)^p\right)^{1/p},
\end{equation}
where the supremum is taken over all finite families $\mathcal{I}=\{I\}$ of disjoint intervals contained in $[0,P]$. Denote by $\mathcal{BV}_p(0,P)$ the class of all functions for which (\ref{p-var}) is finite. For $p=1$, (\ref{p-var}) is the classical total variation of Jordan while for $p>1$, the notion of $p$-variation originates in the works of Wiener \cite{Wi24} and L.C. Young \cite{Yo36}. Note that for $1\le p<q<\infty$, the following embedding holds:
\begin{equation}
    \label{increasing}
    \mathcal{BV}_p(0,P)\subset\mathcal{BV}_q(0,P)
\end{equation}
For technical reasons, it is more convenient to work with the function
\begin{equation}
    \label{simpleRiemann}
    \Phi(x)=\sum_{n=1}^\infty\frac{e^{\pi in^2x}}{\pi i n^2}.
\end{equation}
Note that $\Phi$ and $R$ are 2-periodic, and that
\begin{equation}
    \label{phi-r-relation}
    \pi\Phi(x)=R(x)+i\widetilde{R}(x),
\end{equation}
where $\widetilde{R}$ denotes the conjugate Fourier series of (\ref{riemann}) (see (\ref{conjugate}) below). We first prove the following result, relating the variation of $R$ to the variation of $\Phi$.
\begin{theorem}
    \label{phi-r-relationTeo}
    For $p\ge1$ there holds
    \begin{equation}
        \label{equiv}
        \mathcal{V}_p(\Phi;[0,2])<\infty\quad\Leftrightarrow\quad\mathcal{V}_p(R;[0,2])<\infty.
    \end{equation}
\end{theorem}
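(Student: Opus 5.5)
The direction $\Rightarrow$ is immediate. By (\ref{phi-r-relation}), $R=\mathrm{Re}(\pi\Phi)$, so $|R(x)-R(y)|\le\pi|\Phi(x)-\Phi(y)|$. Hence $\osc(R;I)\le\pi\,\osc(\Phi;I)$ for every interval $I$, and therefore $\mathcal{V}_p(R;[0,2])\le\pi\,\mathcal{V}_p(\Phi;[0,2])$.

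For the converse, the imaginary part of $\pi\Phi$ is, up to sign, $C(x)=\sum_{n\ge1}\cos(\pi n^2x)/n^2$. Conjugation does not preserve $p$-variation in general, so the plan is to use the arithmetic of $n^2$ to write $C$ explicitly through dilates and translates of $R$.

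\emph{The shift identity.} For odd $n$ we have $n^2\equiv1\pmod 8$, so $\pi n^2/2\equiv\pi/2\pmod{4\pi}$ and $\sin(\pi n^2(x+\tfrac12))=\cos(\pi n^2x)$. For even $n=2m$ we have $\pi n^2/2=2\pi m^2$, so the sine is unchanged. Splitting the series for $R(x+\frac12)$ into odd and even $n$, and noting that the even terms sum to $\frac14R(4x)$ and the odd cosine terms sum to $C(x)-\frac14C(4x)$, we get
\begin{equation}
\nonumber
C(x)-\tfrac14C(4x)=R(x+\tfrac12)-\tfrac14R(4x).
\end{equation}
Iterating this identity, and using the uniform convergence guaranteed by the factor $4^{-k}$ together with the boundedness of $R$ and $C$, gives
\begin{equation}
\nonumber
C(x)=\sum_{k=0}^\infty4^{-k}\Bigl(R(4^kx+\tfrac12)-\tfrac14R(4^{k+1}x)\Bigr).
\end{equation}

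\emph{Variation of dilates.} Next I would check that $\mathcal{V}_p(\cdot;[0,2])$ is a seminorm. For a finite sum this follows from subadditivity of $\osc$ and Minkowski's inequality applied to each fixed family $\mathcal{I}$. For the uniformly convergent infinite series, fix $\mathcal{I}$, note that each $\osc(\cdot;I)$ passes to the uniform limit, and only then take the supremum. I then need the scaling bound $\mathcal{V}_p(g(4^k\cdot+c);[0,2])\le 4^{k/p}\,\mathcal{V}_p(g;[0,2])$ for a $2$-periodic $g$. To prove it, take any disjoint family in $[0,2]$ and split each interval at the $4^k$ grid points $2j4^{-k}$. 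This only increases $\sum\osc^p$, since $\osc$ of a union of two adjacent intervals is at most the sum of the oscillations and $p\ge1$. Each of the resulting $4^k$ pieces corresponds, after the change of variables, to a disjoint family in one full period of $g$, which contributes at most $\mathcal{V}_p(g)^p$. Combining, $\mathcal{V}_p(C)\le\sum_k4^{-k}4^{k/p}\,\tfrac54\,\mathcal{V}_p(R)\lesssim\mathcal{V}_p(R)$, and the series converges precisely when $p>1$. Hence $\mathcal{V}_p(\Phi)\le\pi^{-1}(\mathcal{V}_p(R)+\mathcal{V}_p(C))<\infty$.

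\emph{The endpoint $p=1$.} Here the geometric series diverges, and I would argue separately that both sides of (\ref{equiv}) are false. A function of bounded variation is differentiable a.e., whereas $R$ is non-differentiable a.e. by Hardy \cite{Ha1916}. So $\mathcal{V}_1(R)=\infty$, and then $\mathcal{V}_1(\Phi)=\infty$ by the first direction. I expect the main obstacle to be finding the shift identity expressing $C$ through $R$; once it is available, the dilation estimate and the summation are routine.
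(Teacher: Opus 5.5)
Your overall approach is the paper's. You write $C=-\widetilde R$ as a series of $4^{-k}$ times translates of $R(4^k\cdot)$, bound the $p$-variation of a $4^k$-dilate by a multiple of $4^{k/p}\mathcal V_p(R;[0,2])$, and sum $\sum_k4^{-k(1-1/p)}$. You reach the identity slightly differently: one shift by $\tfrac12$ using $n^2\equiv1\pmod 8$, then iterating $C(x)-\tfrac14C(4x)=R(x+\tfrac12)-\tfrac14R(4x)$. The paper instead uses the difference $R(x+\tfrac12)-R(x-\tfrac12)$ together with $n=2^lm$. Your identity is correct, as are the $\Rightarrow$ direction and the passage of the seminorm through the series. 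Your separate treatment of $p=1$ is also correct and actually needed: the paper's constant $c_p=\sum_l2^{-2l(1-1/p)}$ is infinite at $p=1$ too, and its proof does not handle that case.

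The step that fails is your proof of the dilation bound. For $p>1$, splitting an interval does \emph{not} increase $\sum\osc^p$. Subadditivity $\osc(g;I\cup J)\le\osc(g;I)+\osc(g;J)$ only gives $\osc(g;I\cup J)^p\le(\osc(g;I)+\osc(g;J))^p$, and since $(a+b)^p\ge a^p+b^p$, the inequality points the wrong way. For the identity function, $[0,2]$ gives $2^p$ while its two halves give only $2$. You use this step only for $p>1$, so the refinement argument proves nothing as written.

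Replace refinement by counting:
\begin{enumerate}[(i)]
\item Each interior grid point $2j4^{-k}$ lies in at most one interval of a disjoint family. So at most $4^k-1$ intervals meet these points, and by periodicity each has oscillation at most $\osc(g;[0,2])\le\mathcal V_p(g;[0,2])$.
\item Every other interval lies in a single cell. After the change of variables, the intervals of one cell form a disjoint family in a translated period $[c',c'+2]$ with $c'\in[0,2)$. Sort them into those in $[c',2]$, those in $[2,c'+2]$ (shifted back by $2$), and the at most one containing $2$ in its interior. This shows each cell contributes at most $3\mathcal V_p(g;[0,2])^p$.
\end{enumerate}
Hence $\mathcal V_p(g(4^k\cdot+c);[0,2])^p\le 4^{k+1}\mathcal V_p(g;[0,2])^p$. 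The extra factor $4^{1/p}$ does not affect your summation, so with this repair the argument goes through for all $p\ge1$.
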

The fact that $\Phi\in\mathcal{BV}_p(0,2)$ implies that $R\in\mathcal{BV}_p(0,2)$ is more or less clear. The converse is more involved, since it is not obvious that $R\in\mathcal{BV}_p(0,2)$ implies that $\widetilde{R}\in\mathcal{BV}_p(0,2)$. Nevertheless, the last statement is true and we prove it below, using a curious representation formula for $\widetilde{R}$ in terms of $R$.

It is not difficult to show that for $x,y\in\mathbb{R}$, there holds
\begin{equation}
    \label{1/2-holder}
    |\Phi(x)-\Phi(y)|\le 4\sqrt{|x-y|},
\end{equation}
see \cite[p. 316]{Du91}. Clearly (\ref{1/2-holder}) implies that $\osc(\Phi;I)\le 4\sqrt{|I|}$, whence $\Phi\in\mathcal{BV}_2(0,2)$. At the same time, $\Phi$ is non-differentiable a.e. and consequently $\Phi\notin\mathcal{BV}_1(0,2)$. A natural problem is to determine precisely for which values of $p>1$ the function $\Phi$ belongs to $\mathcal{BV}_p(0,2)$. 
\begin{theorem}
    \label{simpleResult}
    The following statements are true.
    \begin{enumerate}[(a)]
        \item\label{simpleResult2} If $p<4/3$, then $\Phi\notin\mathcal{BV}_p(0,2)$.
        \item\label{simpleResult1} If $p>4/3$, then $\Phi\in\mathcal{BV}_p(0,2)$.
    \end{enumerate}
\end{theorem}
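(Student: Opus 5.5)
The plan is to use Duistermaat's asymptotic expansion of $\Phi$ near rationals, which comes from the theta-function modular transformation. For a rational $r/q$ in lowest terms with $r,q$ not both odd, and small $h$, one has
\begin{equation}
\nonumber
\Phi\left(\frac rq+h\right)-\Phi\left(\frac rq\right)=c(r,q)\,q^{-1/2}|h|^{1/2}e^{\pm i\pi/4}+O\bigl(q^{3/2}|h|^{3/2}\bigr)+\text{(a term of size }\lesssim q^{1/2}|h|\text{)}.
\end{equation}
More precisely, $\Phi(r/q+h)-\Phi(r/q)$ is a main term of size $\asymp q^{-1/2}|h|^{1/2}$ plus a rescaled copy of $\Phi$ evaluated at $-1/(q^2h)$ and multiplied by $q^{3/2}|h|$-type factors. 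Here $|c(r,q)|\asymp 1$ when $q\not\equiv 2\pmod 4$ (Gauss sums), and $c(r,q)=0$ when $q\equiv 2 \pmod 4$. I would take this expansion, with its explicit error term, as the basic tool. Both parts of Theorem \ref{simpleResult} then come down to counting rationals in Farey-type families.

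For part (a), the lower bound, I build a disjoint family of intervals. Fix a large $Q$ and take the rationals $r/q\in[0,2]$ with $Q\le q<2Q$, $q$ odd, $\gcd(r,q)=1$; there are about $Q^2$ of them. Attach to each the interval $I_{r/q}=[r/q,\,r/q+\delta q^{-2}]$ with a small fixed $\delta$. These intervals are pairwise disjoint, since distinct such fractions are separated by at least $1/(4Q^2)$. Taking $h=\delta q^{-2}$, the main term is $\asymp q^{-1/2}\cdot\delta^{1/2}q^{-1}=\delta^{1/2}q^{-3/2}$. The error (the rescaled $\Phi$ term) is $\lesssim q^{3/2}h\cdot\sup|\Phi|\lesssim \delta q^{-1/2}$, which is too big as it stands. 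So I would use instead the finer form of the expansion, in which the remainder is $q^{3/2}|h|\,\bigl(\Phi(-1/(q^2h))\text{-type bounded oscillating term}\bigr)$ and is really of order $\delta^{3/2}q^{-3/2}$ after using the Hölder bound (\ref{1/2-holder}) on the rescaled function. Choosing $\delta$ small then gives $\osc(\Phi;I_{r/q})\gtrsim q^{-3/2}\asymp Q^{-3/2}$. Hence
\begin{equation}
\nonumber
\sum \osc(\Phi;I)^p\gtrsim Q^2\cdot Q^{-3p/2},
\end{equation}
which tends to $\infty$ as $Q\to\infty$ when $p<4/3$.

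For part (b), the upper bound, I take an arbitrary disjoint family $\mathcal I$ and sort the intervals dyadically by length, $|I|\sim 2^{-k}$. For each $I$ I choose, by Dirichlet approximation with parameter $N$ where $N^2\sim 2^{k}$, a rational $r/q$ with $q\le N$ within $1/(qN)$ of $I$. The expansion then gives $\osc(\Phi;I)\lesssim q^{-1/2}|I|^{1/2}+q^{1/2}|I|$, plus a term controlled in the same way. Next I count: the number of disjoint intervals of length $\sim 2^{-k}$ whose associated denominator is $\sim q\sim 2^j\le 2^{k/2}$ is at most the number of such rationals, about $2^{2j}$, times the number of disjoint intervals fitting in their neighbourhoods of width $1/(qN)$, which is about $2^{k}/(2^jN)$. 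This gives
\begin{equation}
\nonumber
\sum_{I\in\mathcal I}\osc(\Phi;I)^p\lesssim \sum_{k}\sum_{j\le k/2} 2^{2j}\max\Bigl(1,\tfrac{2^{k/2}}{2^{j}}\Bigr)\cdot 2^{-jp/2}2^{-kp/2}.
\end{equation}
The worst case is $2^j\sim 2^{k/2}$, which gives terms of order $2^{k(1-3p/4)}$. These are summable exactly when $p>4/3$, and the other ranges are summable geometrically as well. There is no overcounting across different $k$, because a dyadic-in-$k$ sum of totals bounded geometrically suffices.

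The main obstacle is making the local expansion uniform. The error term has to be controlled simultaneously in $q$ and $h$ over the whole range $|h|\lesssim 1/(qN)$, and the case $q\equiv 2\pmod 4$, where the leading term vanishes and $\Phi$ behaves like $|h|^{3/2}$, needs separate handling. I would deal with this by iterating the modular transformation $h\mapsto -1/(q^2h)$, which is essentially a continued-fraction descent, and by using (\ref{1/2-holder}) at each step to bound the oscillation of the rescaled copy.
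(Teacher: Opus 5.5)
Your route differs from the paper's, and the strategy works, but two details of the expansion you rely on are wrong and must be fixed.

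First, with the normalization $e^{\pi i n^2x}$ nothing degenerates at $q\equiv 2\pmod 4$; that happens for $e^{2\pi i n^2x}$. If $r+q$ is odd, then $r/q=\gamma(\infty)$ for some $\gamma$ in the theta group with lower row $(c,d)$, $|c|=q$. Integrating $\theta(\gamma s)=\epsilon(cs+d)^{1/2}\theta(s)$, where $\theta(s)=\sum_{n\in\mathbb{Z}}e^{\pi i n^2 s}$ and $|\epsilon|=1$, and integrating by parts once, gives
\[
\Phi(r/q+h)-\Phi(r/q)=\epsilon' q^{-1/2}|h|^{1/2}-\frac h2+E(h),\qquad |E(h)|\le C\,q^{3/2}|h|^{3/2}.
\]
Here $|\epsilon'|=1$ (the phase depends on the sign of $h$) and $C$ is absolute. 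The $|h|^{1/2}$ term is absent exactly when $r$ and $q$ are both odd (Gerver's points). There $\Phi(r/q+h)-\Phi(r/q)=-h/2+O(q^{3/2}|h|^{3/2})$.

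So in (a) you must restrict to $r$ even, which still leaves $\asymp Q^2$ fractions. As written, about half of your intervals have no $|h|^{1/2}$ term, and the claimed lower bound fails for them. In (b) the odd/odd case only helps.

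Second, the rescaled copy of $\Phi$ carries the factor $(cz+d)^{-3/2}$, whose modulus is $(q|h|)^{3/2}$, not $q^{3/2}|h|$. With your factor, the H\"older bound (\ref{1/2-holder}) cannot rescue (a): at $h=\delta q^{-2}$ the rescaled copy is evaluated at the single point $-1/\delta$, and H\"older continuity says nothing about its size there. With the correct factor the remainder is $O(\delta^{3/2}q^{-3/2})$ outright, and (a) follows for small $\delta$.

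The same uniform bound removes your ``main obstacle'' in (b). Suppose $|I|\asymp N^{-2}$ and $I$ lies within $2/(qN)$ of $r/q$ with $q\le N$. Then the remainder on $I$ is at most $CN^{-3/2}\le Cq^{-1/2}N^{-1}$, so $\osc(\Phi;I)\le Cq^{-1/2}|I|^{1/2}$. No continued-fraction iteration is needed, and your count then gives $C2^{k(1-3p/4)}$ at each dyadic length scale.

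On the comparison with the paper:
\begin{itemize}
\item The paper gets (a) softly: Hardy's statement that $\Phi(x+h)-\Phi(x)\neq o(|h|^{3/4})$ a.e., combined with the Vitali-covering criterion of Proposition \ref{exceptional}.
\item It gets (b) from an elementary Parseval estimate $\omega(\Phi;\delta)_2\le 3\delta^{3/4}$ fed into Terekhin's embedding (\ref{modulus1}). That uses only the size of the Fourier coefficients, no modular structure, and both tools apply to far more general functions.
\item Your argument derives both halves from the theta-group self-similarity and needs neither Hardy's theorem nor the Besov embedding.
\item Yours is also quantitative scale by scale. It locates the $p$-variation near rationals with $q\approx|I|^{-1/2}$, and your upper and lower bounds at each scale have the same order $Q^{2-3p/2}$. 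That is exactly why $4/3$ is critical.
\end{itemize}

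This precision seems to reach further than the paper. At $p=4/3$ your scale-$Q$ family with parameter $\delta$ has total length about $\delta$ but contributes about $\delta^{2/3}$. Stack scales $Q=2^m$ with $\sum_m\delta_m$ small but $\sum_m\delta_m^{2/3}=\infty$, discarding fractions that fall into earlier intervals; this removes only a small proportion at each scale. That appears to show $\Phi\notin\mathcal{BV}_{4/3}(0,2)$, which would settle the endpoint the paper leaves open, contrary to its conjecture.
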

Eceizabarrena \cite{Ec21} proved estimates for the Hausdorff dimension ${\rm dim}_H$ of the image of $\Phi([0,2])\subset\mathbb{C}$. In particular, the upper bound
\begin{equation}
    \label{EciIneq}
    {\rm dim}_H\Phi([0,2])\le\frac{4}{3}
\end{equation}
was obtained. Our result provides another perspective on (\ref{EciIneq}), see Remark \ref{Eceizabarrena} below. 

Regarding the proof of Theorem \ref{simpleResult}, statement (\ref{simpleResult2}) follows from an observation of Hardy \cite{Ha1916} that for a.e. $x$,
\begin{equation}
    \nonumber
    \Phi(x+h)-\Phi(x)\neq o(h^{3/4}),
\end{equation}
together with Proposition \ref{exceptional} below.
Statement (\ref{simpleResult1}) follows from an embedding theorem of Terekhin \cite{Te67} together with an estimate of the $L^p$ modulus of continuity of $\Phi$ (Proposition \ref{modulusLp} below).

Note that Theorem \ref{simpleResult} leaves the critical value $p=4/3$ open.
We conjecture that $\Phi\in\mathcal{BV}_{4/3}(0,2)$; at the moment we are not able to prove this. 

\section{Variation of of $R$ and $\Phi$: Proof of Theorem \ref{phi-r-relationTeo}}

Let $h\in\mathbb{R}$ and $\lambda>0$. Translation and dilation operators are defined as usual:
\begin{equation}
    \nonumber
    \tau_hf(x)=f(x+h),\quad \delta_\lambda f(x)=f(\lambda x).
\end{equation}
It is not difficult to verify that
\begin{equation}
    \label{invar1}
    \mathcal{V}_p(\tau_hf;[0,P])=\mathcal{V}_p(f;[0,P])
\end{equation}
and for $m\in\mathbb{N}$, then
\begin{equation}
    \label{invar2}
    \mathcal{V}_p(\delta_m f;[0,P])\le m^{1/p}\mathcal{V}_p(f;[0,P]).
\end{equation}

\begin{proof}[Proof of Theorem \ref{phi-r-relationTeo}]
    The conjugate Fourier series of (\ref{riemann}) is
    \begin{equation}
        \label{conjugate}
        \widetilde{R}(x)=-\sum_{n=1}^\infty\frac{\cos(\pi n^2 x)}{n^2}.
    \end{equation}
    By (\ref{phi-r-relation}), it is clear that $\Phi\in\mathcal{BV}_p(0,2)$, then $R$ and $\widetilde{R}$ belong to $\mathcal{BV}_p(0,2)$; this proves the ''$\Rightarrow$''-direction of (\ref{equiv}). Similarly, if $R,\widetilde{R}\in\mathcal{BV}_p(0,2)$, then $\Phi\in\mathcal{BV}_p(0,2)$. For the converse implication, however, we only assume that $R\in\mathcal{BV}_p(0,2)$; thus we must demonstrate that $R\in\mathcal{BV}_p(0,2)$ implies $\widetilde{R}\in\mathcal{BV}_p(0,2)$. Assume that $R\in\mathcal{BV}_p(0,2)$.
    Note that for any $n\in\mathbb{N}$ 
    \begin{equation}
        \nonumber
        \sin(\pi n^2x+\pi n^2/2)-\sin(\pi n^2x-\pi n^2/2)=2\cos(\pi n^2x)\sin(\pi n^2/2).
    \end{equation}
    If $n$ is even, then $\sin(n^2\pi/2)=0$. If $n$ is odd, then  $n^2\equiv 1\pmod{4}$ and therefore $\sin(\pi n^2/2)=1$. Hence,
    \begin{equation}
        \label{identityPhi}
        R(x+1/2)-R(x-1/2)=2\sum_{m\text{ odd}}\frac{\cos(\pi m^2x)}{m^2}.
    \end{equation}
    On the other hand, each $n\in\mathbb{N}$ can be written uniquely as $n=2^lm$ for some $l\ge0$ and odd $m$. Thus,
    \begin{eqnarray}
        \nonumber
        \widetilde{R}(x)&=&-\sum_{l=0}^\infty\sum_{m\text{ odd}}\frac{\cos(\pi 2^{2l}m^2x)}{2^{2l}m^2}=-\sum_{l=0}^\infty 2^{-2l}\sum_{m\text{ odd}}\frac{\cos(\pi 2^{2l}m^2x)}{m^2}\\
        \nonumber
        &=&-\sum_{l=0}^\infty 2^{-2l-1}\left(R(2^{2l}x+1/2)-R(2^{2l}x-1/2)\right),
    \end{eqnarray}
    where we used (\ref{identityPhi}) in the second line. Denote
    \begin{equation}
        \nonumber
        \rho_l(x)=R(2^{2l}x+1/2)-R(2^{2l}x-1/2).
    \end{equation}
    By the assumption $R\in\mathcal{BV}_p(0,2)$, we use (\ref{invar1}) and (\ref{invar2}) to obtain
    \begin{equation}
        \nonumber
        \mathcal{V}_p(\rho_l;[0,2])\le 2^{2l/p+1}\mathcal{V}_p(R;[0,2]).
    \end{equation}
    Since $\mathcal{V}_p(\cdot;[0,2])$ is a homogeneous seminorm, we have 
    \begin{eqnarray}
        \nonumber
        \mathcal{V}_p(\widetilde{R};[0,2])&\le&\sum_{l=0}^\infty2^{-2l-1}\mathcal{V}_p(\rho_l;[0,2])\le\sum_{l=0}^\infty 2^{-2l(1-1/p)}\mathcal{V}_p(R;[0,2])\\
        \nonumber
        &=&c_p\mathcal{V}_p(R;[0,2]),
    \end{eqnarray}
    which is finite by the assumption $R\in\mathcal{BV}_p(0,2)$.
\end{proof}

\section{A criterion for infinite variation; proof of Theorem \ref{simpleResult}(\ref{simpleResult2})}
To prove Theorem \ref{simpleResult}(\ref{simpleResult2}), we shall make use of the following result, which may be of independent interest.
\begin{prop}
    \label{exceptional}
    Let $f:\mathbb{R}\rightarrow\mathbb{C}$ be $P$-periodic and let $\beta\in(0,1]$. Set
    \begin{equation}
        \nonumber
        E(f,\beta)=\left\{x\in[0,P]:\limsup_{h\rightarrow0}\frac{|f(x+h)-f(x)|}{|h|^\beta}=\infty\right\}.
    \end{equation}
    If $E(f,\beta)$ has positive outer Lebesgue measure, then $f\notin\mathcal{BV}_{1/\beta}(0,P)$.
\end{prop}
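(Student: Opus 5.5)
We argue by contradiction: assume $f\in\mathcal{BV}_{1/\beta}(0,P)$ with $V=\mathcal{V}_{1/\beta}(f;[0,P])<\infty$ (so $f$ is bounded), and write $p=1/\beta$. The strategy is a Vitali covering argument. It produces, for every large $M$, a finite disjoint family of intervals whose total length is bounded below, on each of which $\osc(f;I)^p\ge M^p|I|$. Summing over the family then forces $V^p\ge cM^p$ for all $M$, which is impossible.

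\emph{Step 1 (periodicity).} By periodicity, the intervals $[x,x+h]$ or $[x+h,x]$ may stick out of $[0,P]$. To handle this, set $E=E(f,\beta)$ and use $P$-periodicity to view everything on $[0,2P]$. Splitting an interval crossing $P$ at the point $P$ and translating one piece by $-P$ shows that
\[
\mathcal{V}_p(f;[0,2P])^p\le 2\,\mathcal{V}_p(f;[0,P])^p+C,
\]
where the constant $C$ accounts for the at most one interval containing $P$ in its interior, whose oscillation is bounded by $2\sup|f|$ plus the oscillations of its two pieces. Alternatively, we can shrink $E$ to $E\cap[\epsilon,P-\epsilon]$, which still has positive outer measure for small $\epsilon$, and only use $|h|<\epsilon$. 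We choose the latter, as it is simpler.

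\emph{Step 2 (covering).} Fix $M>0$. For every $x\in E'=E\cap[\epsilon,P-\epsilon]$ there are arbitrarily small $h$ with $|h|<\epsilon$ and $|f(x+h)-f(x)|>M|h|^\beta$. Let $I_{x,h}$ be the closed interval with endpoints $x$ and $x+h$. Then $I_{x,h}\subset[0,P]$, $|I_{x,h}|=|h|$, and
\[
\osc(f;I_{x,h})>M|I_{x,h}|^\beta,\qquad\text{i.e.}\qquad \osc(f;I_{x,h})^p>M^p|I_{x,h}|.
\]
These intervals form a Vitali cover of $E'$: each point lies in intervals of arbitrarily small length, and it is an endpoint rather than an interior point, which is still fine for the Vitali covering lemma in $\mathbb{R}$ when stated with closed intervals containing the point. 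The Vitali covering theorem holds for arbitrary sets with outer measure, so we get a countable disjoint subfamily covering almost all of $E'$. Hence some finite disjoint subfamily $I_1,\dots,I_N$ satisfies
\[
\sum_k|I_k|\ge\tfrac12|E'|^*>0.
\]
We do not even need the full Vitali theorem: the simpler statement that a finite disjoint subfamily captures a fixed fraction of the outer measure (the Vitali lemma with factor $3$, i.e.\ $\sum_k|I_k|\ge|E'|^*/3-\eta$) suffices.

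\emph{Step 3 (conclusion).} The closed intervals $I_k$ are disjoint; if a definition insists on open or half-open intervals, shrinking them slightly changes the oscillation arbitrarily little by continuity, and in general one can just take the intervals as they are. Therefore
\[
V^p\ge\sum_k\osc(f;I_k)^p\ge M^p\sum_k|I_k|\ge M^p|E'|^*/4.
\]
Since $M$ is arbitrary, this is a contradiction.

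The only delicate point is Step 2: justifying the Vitali-type selection for a set that is merely of positive outer measure, using intervals that have $x$ as an endpoint. I would invoke the Vitali covering theorem in its standard form for fine covers by nondegenerate closed intervals, which applies to arbitrary subsets of $\mathbb{R}$ with outer measure, and accept that the selected intervals are pairwise disjoint closed intervals. No measurability of $E(f,\beta)$ is needed anywhere.
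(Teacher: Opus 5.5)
Your proof is correct and is essentially the paper's argument: restrict to $E(f,\beta)\cap[\epsilon,P-\epsilon]$ with $|h|<\epsilon$, use a Vitali-type covering lemma to extract finitely many disjoint intervals in $[0,P]$ of total length bounded below, and sum $\osc(f;I)^{1/\beta}>M^{1/\beta}|I|$ to contradict finiteness of $\mathcal{V}_{1/\beta}$ as $M\to\infty$. The only difference is cosmetic. The paper uses centred intervals $[x-|h_x|,x+|h_x|]$, one per point, with the simple Vitali lemma and its constant $c_0$, while you use the fine cover by intervals with endpoints $x$ and $x+h$ together with the full Vitali theorem. Your appeal to continuity in Step 3 is unneeded, and unavailable for general $f$, but harmless, since the definition of $\mathcal{V}_p$ allows closed intervals.
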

\begin{proof}
    Assume that $E(f,\beta)$ has positive outer Lebesgue measure. Take $\epsilon>0$ such that 
    \begin{equation}
        \nonumber
        |E(f,\beta)\cap (\epsilon, P-\epsilon)|_e>0,
    \end{equation}
    where $|\cdot|_e$ denotes outer Lebesgue measure.
    Let $M>0$ be arbitrary. For each $x\in E(f,\beta)\cap(\epsilon,P-\epsilon)$ there is $h_x$ with
    \begin{equation}
        \nonumber
        0<|h_x|<\epsilon
    \end{equation}
    such that
    \begin{equation}
        \nonumber
        |f(x+h_x)-f(x)|>M|h_x|^\beta.
    \end{equation}
    Denote 
    \begin{equation}
        \nonumber
        I_x=[x-|h_x|,x+|h_x|],
    \end{equation}
    then $\{I_x:x\in E(f,\beta)\cap (\epsilon,P-\epsilon)\}$ covers $E(f,\beta)\cap (\epsilon,P-\epsilon)$. By Vitali's simple covering lemma \cite[p. 102]{WeZy77}, there is a number $c_0>0$ and a finite subcollection $\{I_{x_j}:1\le j\le N\}$ of disjoint intervals such that 
    \begin{equation}
        \nonumber
        c_0|E(f,\beta)\cap(\epsilon,P-\epsilon)|_e\le\sum_{j=1}^N|I_{x_j}|.
    \end{equation}
    On the other hand, 
    \begin{equation}
        \nonumber
        |I_{x_j}|=2|h_{x_j}|<2M^{-1/\beta}|f(x_j+h_{x_j})-f(x_j)|^{1/\beta}.
    \end{equation}
    Whence,
    \begin{eqnarray}
        \nonumber
        c_0|E(f,\beta)\cap(\epsilon,P-\epsilon)|_e&\le&2\sum_{j=1}^N|h_{x_j}|\le 2M^{-1/\beta}\sum_{j=1}^N|f(x_j+h_{x_j})-f(x_j)|^{1/\beta}\\
        \nonumber
        &\le& 2M^{-1/\beta}\mathcal{V}_{1/\beta}(f;[0,P])^{1/\beta} .
    \end{eqnarray}
    Consequently,
    \begin{equation}
        \nonumber
        \mathcal{V}_{1/\beta}(f;[0,P])\ge M\left(\frac{c_0|E(f,\beta)\cap(\epsilon,P-\epsilon)|_e}{2}\right)^\beta.
    \end{equation}
    Since $M$ was arbitrary, we obtain $f\notin\mathcal{BV}_{1/\beta}(0,P)$.
\end{proof}\begin{proof}[Proof of Theorem \ref{simpleResult}(\ref{simpleResult2})]
    Hardy \cite[p. 323]{Ha1916} observed that 
    \begin{equation}
        \nonumber
        \liminf_{h\rightarrow0}\frac{|\Phi(x+h)-\Phi(x)|}{|h|^{3/4}}>0
    \end{equation}
    for every irrational $x$. Take any $\beta>3/4$. Then
    \begin{equation}
        \nonumber
        \limsup_{h\rightarrow0}\frac{|\Phi(x+h)-\Phi(x)|}{|h|^\beta}=\infty
    \end{equation}
    for a.e. $x$. In other words, $E(\Phi;\beta)$ has full measure in $[0,2]$ for any $\beta>3/4$. By Proposition \ref{exceptional}, $\Phi\notin\mathcal{BV}_{1/\beta}(0,2)$ for any $\beta>3/4$, or equivalently, $\Phi\notin\mathcal{BV}_p(0,2)$ for any $p<4/3$.
\end{proof}

\section{Integral smoothness of $\Phi$; proof of Theorem \ref{simpleResult}(\ref{simpleResult1})}

To prove Theorem \ref{simpleResult}(\ref{simpleResult1}), we make use of the integral smoothness properties of $\Phi$. 
Denote by $L^p(0,P)$ the set of all $P$-periodic functions $f:\mathbb{R}\rightarrow\mathbb{C}$ such that
\begin{equation}
    \nonumber
    \|f\|_{L^p(0,P)}=\left(\int_0^P|f(x)|^p\mathrm{d}x\right)^{1/p}\quad (1\le p<\infty).
\end{equation}
The \emph{$L^p$-modulus of continuity} of $f\in L^p(0,P)$ is defined by
\begin{equation}
    \nonumber
    \omega(f;\delta)_p=\sup_{0<h\le\delta}\|\Delta_hf\|_{L^p(0,P)},
\end{equation}
where $\Delta_hf(x)=(\tau_h-I)f(x)=f(x+h)-f(x)$. Terekhin \cite{Te67} (see also \cite{KoLi09}) showed that for any $p>1$
\begin{equation}
    \label{modulus1}
    \mathcal{V}_p(f;[0,P])\le C\int_0^Pt^{-1/p}\omega(f;t)_p\frac{\mathrm{d}t}{t}.
\end{equation}
We remark that the right-hand side of (\ref{modulus1}) is the standard seminorm in the Besov space $B_{p,1}^{1/p}(0,P)$.
\begin{prop}
    \label{modulusLp}
    For $1\le p\le2$ and any $\delta\in(0,1]$, there holds
    \begin{equation}
        \label{modulusIneq}
        \omega(\Phi;\delta)_p\le 6\delta^{3/4}.
    \end{equation}
\end{prop}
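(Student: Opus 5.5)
Since $L^p$ norms on $[0,2]$ are increasing in $p$ up to a constant (Hölder: $\|g\|_{L^p(0,2)}\le 2^{1/p-1/2}\|g\|_{L^2(0,2)}$), it suffices to treat $p=2$. Because the factor $2^{1/p-1/2}\le\sqrt2$, I will aim for a bound of the form $\|\Delta_h\Phi\|_{L^2(0,2)}\le C h^{3/4}$ with $C\sqrt2\le 6$ for all $0<h\le1$.

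For $p=2$ I use Parseval on the period $2$. The functions $e^{\pi i n^2 x}$, $n\ge1$, are orthogonal on $[0,2]$ with squared norm $2$, and the frequencies $n^2$ are distinct. Hence
\begin{equation}
\nonumber
\|\Delta_h\Phi\|_{L^2(0,2)}^2=2\sum_{n=1}^\infty\frac{|e^{\pi i n^2h}-1|^2}{\pi^2n^4}.
\end{equation}
I bound $|e^{\pi i n^2 h}-1|\le\min(\pi n^2h,2)$ and split at $N\approx h^{-1/2}$. For $n\le N$ the terms are at most $h^2$, summing to at most $Nh^2\le h^{3/2}$. For $n>N$ the terms are at most $4/(\pi^2n^4)$, and the tail $\sum_{n>N}n^{-4}\lesssim N^{-3}\lesssim h^{3/2}$. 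This gives
\begin{equation}
\nonumber
\|\Delta_h\Phi\|_{L^2}^2\le C' h^{3/2},
\end{equation}
so $\|\Delta_h\Phi\|_{L^2}\le\sqrt{C'}\,h^{3/4}$. Taking the supremum over $0<h\le\delta$ gives (\ref{modulusIneq}) with some constant.

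The only real work is tracking constants to reach $6$. Concretely, I use $\sum_{n\le N}2h^2\le 2h^{3/2}$ with $N=\lfloor h^{-1/2}\rfloor$, and
\begin{equation}
\nonumber
\sum_{n>N}\frac{8}{\pi^2n^4}\le\frac{8}{3\pi^2}N^{-3}\cdot\text{(const)}.
\end{equation}
The case $N=0$ cannot occur since $h\le1$. For the tail I bound $\sum_{n\ge N+1}n^{-4}\le\int_N^\infty t^{-4}\,dt=N^{-3}/3$ and use $N\ge h^{-1/2}/2$. This gives a total of roughly $(2+\tfrac{64}{3\pi^2})h^{3/2}\approx4.2\,h^{3/2}$. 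The $L^2$ bound is then about $2.1\,h^{3/4}$, and after the factor $\sqrt2$ about $3\,h^{3/4}$, comfortably below $6$. If a sharper split is needed, I would use $N+1>h^{-1/2}$ directly in the tail integral instead of the crude bound $N\ge h^{-1/2}/2$.
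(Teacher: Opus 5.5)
Your proposal is correct and follows the paper's proof essentially step for step: reduction to $p=2$ via H\"older with the factor $2^{1/p-1/2}\le\sqrt2$, Parseval on $[0,2]$, the bound $|e^{iz}-1|\le\min(2,|z|)$, and the split at $N=\lfloor h^{-1/2}\rfloor$ with $N\ge h^{-1/2}/2$. This yields $\|\Delta_h\Phi\|_{L^2(0,2)}^2\le 2\bigl(1+\tfrac{32}{3\pi^2}\bigr)h^{3/2}$, which is exactly your $(2+\tfrac{64}{3\pi^2})h^{3/2}$, and your final constant of about $3$ sits comfortably under the paper's cruder intermediate bounds $3\delta^{3/4}$ (for $p=2$) and $5\delta^{3/4}$ (general $p$) that lead to $6$.
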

\begin{proof}
We treat first the case $p=2$. Let $h\in(0,1]$ be arbitrary. For any $x\in\mathbb{R}$, there holds
\begin{equation}
    \nonumber
    \Phi(x+h)-\Phi(x)=\sum_{n=1}^\infty(e^{\pi i n^2h}-1)\frac{e^{\pi i n^2 x}}{\pi in^2}.
\end{equation}
Applying Parseval's identity
\begin{equation}
    \label{l2norm}
    \|\Delta_h\Phi\|^2_{L^2(0,2)}=2\sum_{n=1}^\infty\frac{|e^{\pi i n^2h}-1|^2}{\pi^2 n^4}.
\end{equation}
Note that $|e^{iz}-1|\le\min(2,|z|)$, whence
\begin{equation}
    \label{elemIneq}
    |e^{\pi i n^2h}-1|\le\min(2,\pi n^2h).
\end{equation}
Let $N=\lfloor h^{-1/2}\rfloor$, use (\ref{elemIneq}), and split the sum at the right-hand side of (\ref{l2norm}) as follows:
\begin{eqnarray}
    \nonumber
    \sum_{n=1}^\infty\frac{|e^{\pi i n^2h}-1|^2}{\pi^2 n^4}&\le&\sum_{n=1}^\infty\frac{\min(2,\pi n^2h)^2}{\pi^2n^4}\\
    \nonumber
    &\le &\sum_{n\le N}\frac{\pi^2 n^4h^2}{\pi^2 n^4}+\sum_{n> N}\frac{4}{\pi^2 n^4}\\
    \nonumber
    &=&h^2\sum_{n\le N}1+\frac{4}{\pi^2}\sum_{n> N}\frac{1}{n^4}\\
    \nonumber
    &=& h^2N+\frac{4}{3\pi^2N^3}.
\end{eqnarray}
Since $N\le h^{-1/2}$, we get $h^2N\le h^{3/2}$. Furthermore, since $h^{-1/2}\ge1$ and $N=\lfloor h^{-1/2}\rfloor$, we have $N\ge h^{-1/2}/2$. It follows that $1/N^3\le 8h^{3/2}$ and consequently $4/(3\pi^2 N^3)\le (32/3\pi^2)h^{3/2}$. Combining the preceding estimates yields
\begin{equation}
    \nonumber
    \|\Delta_h\Phi\|^2_{L^2(0,2)}\le 2\left(1+\frac{32}{3\pi^2}\right)h^{3/2}.
\end{equation}
Hence, $\omega(\Phi;\delta)_2\le3\delta^{3/4}$ for any $\delta\in(0,1]$. Furthermore, for $1\le p\le 2$
\begin{equation}
    \nonumber
    \|f\|_{L^p(0,2)}\le 2^{1/p-1/2}\|f\|_{L^2(0,2)}.
\end{equation}
Hence, for $1\le p\le 2$ and $\delta\in(0,1]$
\begin{equation}
    \nonumber
    \omega(\Phi;\delta)_p\le5\delta^{3/4},
\end{equation}
thus proving (\ref{modulusIneq}).
\end{proof}
\begin{proof}[Proof of Theorem \ref{simpleResult}(\ref{simpleResult1})]
    Let $4/3<p\le 2$. By (\ref{modulus1}) and Proposition \ref{modulusLp}, 
    \begin{equation}
        \nonumber
        \mathcal{V}_p(\Phi;[0,2])\le C\int_0^1t^{3/4-1/p-1}\mathrm{d}t+C\int_1^2t^{-1/p}\omega(\Phi;t)_p\frac{\mathrm{d}t}{t}.
    \end{equation}
    The first integral is finite, since $4/3-1/p>0$, while the second is finite since
    \begin{equation}
        \nonumber
        \omega(\Phi;t)_p\le 2\|\Phi\|_{L^p(0,2)}.
    \end{equation}
    Hence, $\Phi\in\mathcal{BV}_p(0,2)$ for $4/3<p\le2$. For $p>2$, the conclusion follows from $\Phi\in\mathcal{BV}_2(0,2)$ and the embedding (\ref{increasing}).
\end{proof}

\begin{rem}
    \label{Eceizabarrena}
    Assume that $f:\mathbb{R}\rightarrow\mathbb{C}$ is $P$-periodic. It appears to be well-known (see, e.g., \cite[Eq. (9)]{Ya18}) that if  $f\in\mathcal{BV}_p(0,P)$, then
    \begin{equation}
    \label{dimEst}
    {\rm dim}_Hf([0,P])\le\min(p,2).
    \end{equation}
    (We identify $\mathbb{C}$ with $\mathbb{R}^2$.)
    
    By Theorem \ref{simpleResult}(\ref{simpleResult1}), $\Phi\in\mathcal{BV}_{4/3+\epsilon}(0,2)$ for any $\epsilon>0$. It follows from (\ref{dimEst}) that
    \begin{equation}
        \nonumber
        \dim_H\Phi([0,2])\le \frac{4}{3}+\epsilon
    \end{equation}
    for any $\epsilon>0$. Letting of $\epsilon\rightarrow0+$ gives (\ref{EciIneq}).
\end{rem}

\bibliographystyle{plain}
\bibliography{ref3}

@article{Du91,
    author = {Duistermaat, J. J.},
    title = {{Selfsimilarity of 'Riemann's nondifferentiable function'}},
    journal = {Nieuw Arch. Wisk.},
    year = 1991,
    volume={9},
    number={4},
    pages={303--337}
}

@article{Ha1916,
    author ={Hardy, G. H.} ,
    title = {{Weierstrass's Non-Differentiable Function}},
    journal ={Trans. Amer. Math. Soc.} ,
    year = {1916},
    volume={17},
    number={3},
    pages={301--325}
}

@article{Ja96,
    author ={Jaffard, S.} ,
    title = {{The spectrum of singularities of Riemann's function}},
    journal ={Rev. Math. Iberoamericana} ,
    year = {1996},
    volume={12},
    number={2},
    pages={441--460}
}

@article{Ec21,
    author ={Eceizabarrena, D.} ,
    title = {{On the Hausdorff dimension of Riemann’s non-differentiable function}},
    journal ={Trans. Amer. Math. Soc.} ,
    year = {2021},
    volume={374},
    pages={7679--7713}
}

@article{Ya18,
    author ={Yao, X.} ,
    title = {{Hausdorff Dimension of the Range and the Graph of Stable-Like Processes}},
    journal ={J. Theor. Probab.} ,
    year = {2018},
    volume={31},
    pages={2412-–2431}
}

@article{Te67,
  author    = {A. P. Terekhin},
  title     = {{Integral smoothness properties of periodic functions of bounded $p$-variation}},
  journal ={Math. Notes},
  year = {1967},
    volume={2},
    pages={659--665}
}

@article{Ge70,
  author    = {Gerver, J.},
  title     = {{The Differentiability of the Riemann Function at Certain Rational Multiples of $\pi$}},
  journal ={Amer. J. Math.},
  year = {1970},
    volume={92},
    number={1},
    pages={33--55}
}

@article{KoLi09,
    author = {Kolyada, V. I. and Lind, M.},
    title = {On functions of bounded $p$-variation},
    journal ={J. Math. Anal. Appl.},
    volume={365},
    number={2},
    pages={582--604},
    year ={2009}
}

@article{Wi24,
    author = {Wiener, N.},
    title = {{The quadratic variation of a function and its Fourier coefficients}},
    journal ={Massachusetts
    J. Math. and Phys.},
    volume={3},
    pages={72--94},
    year ={1924}
}

@article{Yo36,
    author = {Young, L. C.},
    title = {{An inequality of the Hölder type, connected with Stieltjes integration}},
    journal ={Acta Math.},
    volume={67},
    pages={251--282},
    year ={1936}
}

@book{WeZy77,
  title     = {Measure and Integral},
  author    = {Weeden, R. L. and Zygmund, A.},
  year      = {1977},
  publisher = {Marcel Dekker, Inc.}
}

\end{document}